\newtheorem{theorem}{Theorem}[section]
\newtheorem{lemma}[theorem]{Lemma}
\newtheorem{corollary}[theorem]{Corollary}
\newtheorem{proposition}[theorem]{Proposition}
\theoremstyle{definition}
\newtheorem{definition}[theorem]{Definition}
\theoremstyle{remark}
\numberwithin{equation}{section}
\begin{document}

\title[Hom-Lie superalgebra structures]{Hom-Lie superalgebra structures on finite-dimensional simple Lie superalgebras}
\author{Bintao Cao}
\address{School of Mathematics and Computational Science, Sun Yat-sen University,
Guangzhou, 510275, China}
\email{caobt@mail.sysu.edu.cn}
\thanks{The first author is supported by NSFC Grant 11101436.}

\author{Li Luo}
\address{Department of Mathematics, East China Normal University, Shanghai, 200241, China}
\email{lluo@math.ecnu.edu.cn}
\thanks{The second author is supported by the NSFC Grant 11101151 and the Fundamental Research Funds for the Central Universities.}

\subjclass[2000]{Primary 17B05, 17B40, 17B60.}



\keywords{simple Lie superalgebra, Hom-Lie superalgebra.}

\begin{abstract}
Hom-Lie superalgebras, which can be considered as a deformation of
Lie superalgebras, are $\mathbb{Z}_2$-graded generalization of
Hom-Lie algebras. In this paper, we prove that there is only the
trivial Hom-Lie superalgebra structure over a finite-dimensional
simple Lie superalgebra.
\end{abstract}

\maketitle



\section{introduction}

The notion of Hom-Lie algebras was introduced by Hartwig, Larsson
and Silverstrov \cite{HLS}. Part of the reason is to study the
$q$-deformations of the Witt and the Virasoro algebras. In fact, the
works \cite{AS,H,L} have already given the early forerunner of
Hom-Lie algebras, which also studied the $q$-deformations of the
Witt and the Virasoro algebras. We refer to \cite{CKL,CILPP,CPP} for
the other pioneering works of this field.

Because of close relation to discrete and deformed vector fields and
differential calculus, Hom-Lie algebras have recently drawn
considerable attention in \cite{BM,JL,MS,Sh, Y1,Y2,Y3,ZHB}. In
particular, Jin and Li \cite{JL} proved that such an algebra
structure over a finite-dimensional simple Lie algebra is equivalent
to the trivial one. They also determined the isomorphic classes of
nontrivial Hom-Lie algebra structures over finite-dimensional
semi-simple Lie algebras.

Recently, Hom-Lie algebras were generalized to Hom-Lie superalgebras
by Ammar and Makhlouf \cite{AM} and to Hom-Lie color algebras by
Yuan \cite{Y}.

\begin{definition}
A \emph{Hom-Lie superalgebra} is a triple $(\mathfrak{g}, [-,-],
\sigma)$ consisting of a $\mathbb{Z}_2$-graded vector space
$\mathfrak{g}$, a bilinear map $[-,-]:\mathfrak{g}\times
\mathfrak{g}\rightarrow \mathfrak{g}$ and an even linear map
$\sigma: \mathfrak{g}\rightarrow \mathfrak{g}$ satisfying
\begin{equation}
\sigma[x,y]=[\sigma(x),\sigma(y)]\quad\mbox{(multiplicativity)},
\end{equation}
\begin{equation}
[x,y]=-(-1)^{|x||y|}[y,x]\quad\mbox{(graded skew-symmetry)}
\end{equation}
and
\begin{equation}\label{Jacobi}
(-1)^{|x||z|}[\sigma(x),[y,z]]+(-1)^{|y||x|}[\sigma(y),[z,x]]+(-1)^{|z||y|}[\sigma(z),[x,y]]=0
\end{equation}
$$\quad\quad\mbox{(graded Hom-Jacobi identity, or call graded $\sigma$-twisted Jacobi identity)},$$
where $x$, $y$ and $z$ are homogeneous elements in $\mathfrak{g}$.
\end{definition}

\begin{definition}
 For any Lie superalgebra $\mathfrak{g}$, denote its Lie bracket by $[-,-]$
and take an even linear map $\sigma: \mathfrak{g}\rightarrow
\mathfrak{g}$. We say $(\mathfrak{g},\sigma)$ is a \emph{Hom-Lie
superalgebra structure over the Lie superalgebra $\mathfrak{g}$} if
$(\mathfrak{g}, [-,-], \sigma)$ is a Hom-Lie superalgebra.
\end{definition}

It is clear that the Hom-Lie superalgebra
$(\mathfrak{g},[-,-],\mbox{id})$ is the Lie superalgebra
$\mathfrak{g}$ itself. We call $(\mathfrak{g},\mbox{id})$ \emph{the
trivial Hom-Lie superalgebra structure of $\mathfrak{g}$}. If
$(\mathfrak{g},\sigma)$ is a Hom-Lie superalgebra structure on the
Lie superalgebra $\mathfrak{g}$, then $\sigma$ has to be a
homomorphism of $\mathfrak{g}$ because of the multiplicativity
(2.1). For any simple Lie superalgebra $\mathfrak{g}$, a
homomorphism $\sigma: \mathfrak{g}\rightarrow \mathfrak{g}$ is
either an automorphism or $\sigma=0$. If $\sigma=0$, then the
Hom-Lie superalgebra $(\mathfrak{g},\sigma)$ is too simple to say
anything since the graded Hom-Jacobi identity (2.3) means nothing
but ``$0=0$''. Therefore, we always suppose that $\sigma$ is an
automorphism of $\mathfrak{g}$ in the sequel.

See \cite{JL} for the definition of Hom-Lie algebra structures over
a Lie algebra. We point out that they made a slight change for the
definition of Hom-Lie algebras so that in their case both
$(\mathfrak{g},\mbox{id})$ and $(\mathfrak{g},0)$ are the Lie
algebra $\mathfrak{g}$ itself.

Influencing by the work \cite{JL}, it is nature to consider whether
there exist nontrivial Hom-Lie super algebra structures over
finite-dimensional simple Lie superalgebras. The main result of this
present work is as follows.

\vspace{0.3cm} \noindent{\bf Main Theorem:} The Hom-Lie superalgebra
structures on a finite-dimensional simple Lie superalgebra
$\mathfrak{g}$ are trivial. That is, if $(\mathfrak{g},\sigma)$ is a
Hom-Lie superalgebra then $\sigma=\mbox{id}$. \vspace{0.3cm}

It is well known that there are two families of finite-dimensional
Lie superalgebras. One is called classical Lie superalgebras and the
other is called Cartan-type Lie superalgebras. For the classical
ones $\mathfrak{g}=\mathfrak{g}_{\bar{0}}\oplus
\mathfrak{g}_{\bar{1}}$, we first restrict the Hom-Lie superalgebra
structure $(\mathfrak{g},\sigma)$ to the even part
$\mathfrak{g}_{\bar{0}}$ so that we can use the result in \cite{JL}
to get that $\sigma|_{\mathfrak{g}_{\bar{0}}}=\mbox{id}$, and then
combine the description of the automorphisms for classical Lie
superalgebras (c.f. \cite{S,GP}) to check that $\sigma=\mbox{id}$
case by case. For the Cartan-type ones, we first show that the
isomorphism must be identity on the subspace
$\mathfrak{g}_{-1}\bigoplus \mathfrak{g}_{0}$, via the transitivity
of $W(n)$, $S(n)$ or $H(n)$. Then the conclusion is obtained by the
nontriviality and the  irreducibility of $\mathfrak{g}_{-1}$ as a
$\mathfrak{g}_{0}$-module.

The paper is organized as follows. In section 2, we recall the
definition of Hom-Lie superalgebras and list some endomorphisms of
$\mathfrak{gl}(m|n)$, which are used to describe the automorphism
groups for classical simple Lie superalgebras. We also list some
properties of Cartan type Lie superalgebras, which will be used to
prove the main theorem for these type cases in Section 4. Section 3
is devoted to the proof of the main theorem for classical Lie
superalgebras while Section 4 is devoted to those for Cartan-type
Lie superalgebras.

Throughout this paper, the basic field is assumed to be the complex
number field $\mathbb{C}$. The symbol
$\mathbb{Z}_2=\{\bar{0},\bar{1}\}$ will stand for the group of two
elements. When $|x|$ appears in an expression, we will always
implicitly regard $x$ as a $\mathbb{Z}_2$-homogeneous element and
automatically extend the relevant formulae by linearity (whenever
applicable). For any superalgebra in this paper, the homomorphisms
always mean even homomorphisms.

\section{Preliminaries}
In this section, we first list some endomorphisms of
$\mathfrak{gl}(m|n)$, which are used to describe the automorphism
groups for classical simple Lie superalgebras in the Section 3. We
also list some properties of Cartan type Lie superalgebras, which
will be used to prove the Cartan type cases of the main theorem in
the Section 4.

\subsection{Some endomorphisms of
$\mathfrak{gl}(m|n)$} \

Here we list some endomorphisms for the general linear Lie
superalgebra
\begin{equation}
\mathfrak{gl}(m|n):=\left\{\left(\begin{array}{cc}A&B\\C&D
\end{array}\right)|A\in M_m, B\in M_{m,n},C\in M_{n,m},D\in M_n\right\},
\end{equation} where $M_{p,q}$ is the set of all $p\times q$
matrices and $M_{p}:=M_{p,p}$.

For any $(X,Y)\in SL_m\times SL_n$, define $\mbox{Ad}(X,Y)\in
\mbox{End } \mathfrak{gl}(m|n)$ by
\begin{equation}
\mbox{Ad}(X,Y): \left(\begin{array}{cc}A&B\\C&D
\end{array}\right)\mapsto\left(\begin{array}{cc}XAX^{-1}&XBY^{-1}\\YCX^{-1}&YDY^{-1}
\end{array}\right).
\end{equation} This implies a group homomorphism $\mbox{Ad}: SL_m\times
SL_n\rightarrow\textbf{Aut}\mathfrak{gl}(m|n)$. That is,
\begin{equation}
\mbox{Ad}(X_1,Y_1)\mbox{Ad}(X_2,Y_2)=\mbox{Ad}(X_1X_2,Y_1Y_2)\quad\mbox{for
any $(X_1,Y_1), (X_2,Y_2)\in SL_m\times SL_n$}.
\end{equation}

For any $\lambda\in\mathbb{C}^\times:=\mathbb{C}\setminus\{0\}$,
there is also an endomorphism defined by
\begin{equation}
\jmath(\lambda):\left(\begin{array}{cc}A&B\\C&D
\end{array}\right)\mapsto\left(\begin{array}{cc}A&\lambda B\\\lambda^{-1}C&D
\end{array}\right).
\end{equation}
It is clear that
\begin{equation}
\jmath(\lambda_1)\jmath(\lambda_2)=\jmath(\lambda_1\lambda_2)\quad\mbox{for
any $\lambda_1,\lambda_2\in\mathbb{C}^\times$}.
\end{equation}

The \emph{supertransposition} $\tau$ is the endomorphism of
$\mathfrak{gl}(m|n)$ given by
\begin{equation}
\tau:\left(\begin{array}{cc}A&B\\C&D
\end{array}\right)\mapsto\left(\begin{array}{cc}-A^t&C^t\\-B^t&-D^t
\end{array}\right).
\end{equation}
It satisfies that
\begin{equation}
\tau^2=\jmath(-1)\quad\mbox{and}\quad\tau^4=1.
\end{equation}

Furthermore, when $m=n$, there is another endomorphism $\pi$ of
$\mathfrak{gl}(n|n)$ given by
\begin{equation}
\pi:\left(\begin{array}{cc}A&B\\C&D
\end{array}\right)\mapsto\left(\begin{array}{cc}D&C\\B&A
\end{array}\right),
\end{equation}
which satisfies that
\begin{equation}
\pi^2=1.
\end{equation}

At last, for any $\left(\begin{array}{cc}a&b\\c&d
\end{array}\right)\in SL_2$, we define the endomorphism of
$\mathfrak{gl}(2|2)$ as follows,
\begin{equation}
\rho\left(\begin{array}{cc}a&b\\c&d
\end{array}\right):\left(\begin{array}{cc}A&B\\C&D
\end{array}\right)\mapsto\left(\begin{array}{cc}A&aB+b\Psi(C)\\c\Psi(B)+dC&D
\end{array}\right),
\end{equation} where $\Psi(F)=\left(\begin{array}{cc}0&1\\-1&0
\end{array}\right)F^t\left(\begin{array}{cc}0&1\\-1&0
\end{array}\right)$. One can check that $\rho:SL_2\rightarrow\textbf{Aut}\mathfrak{gl}(2|2)$ is a group homomorphism. That is,
\begin{equation}
\rho(AB)=\rho(A)\rho(B) \quad\mbox{for any $A,B\in SL_2$}.
\end{equation}

Moreover, we can check easily that for any $(X,Y)\in SL_m\times
SL_n$ and $\lambda\in\mathbb{C}^\times$
\begin{equation}
\mbox{Ad}(X,Y)\jmath(\lambda)=\jmath(\lambda)\mbox{Ad}(X,Y),
\end{equation}
\begin{equation}
\mbox{Ad}(X,Y)\tau=\tau\mbox{Ad}((X^t)^{-1},(Y^{t})^{-1})
\end{equation} and
\begin{equation}
\jmath(\lambda)\tau=\tau\jmath(\lambda^{-1}).
\end{equation}
In the case of $m=n$,
\begin{equation}
\mbox{Ad}(X,Y)\pi=\pi\mbox{Ad}(Y,X),
\end{equation}
\begin{equation}
\jmath(\lambda)\pi=\pi\jmath(\lambda^{-1})
\end{equation} and
\begin{equation}
\tau\pi=\pi\tau\jmath(-1)=\pi\tau^3.
\end{equation}
Finally if $m=n=2$, then for any $\left(\begin{array}{cc}a&b\\c&d
\end{array}\right)\in SL_2$,
\begin{equation}
\mbox{Ad}(X,Y)\rho\left(\begin{array}{cc}a&b\\c&d
\end{array}\right)=\rho\left(\begin{array}{cc}a&b\\c&d
\end{array}\right)\mbox{Ad}(X,Y)
\end{equation}
and
\begin{equation}
\rho\left(\begin{array}{cc}a&b\\c&d
\end{array}\right)\pi=\pi\rho\left(\begin{array}{cc}d&c\\b&a
\end{array}\right).
\end{equation}

\subsection{Some properties for Cartan type Lie superalgebras} \

Let $\Lambda(n)$ be the exterior algebra in $n$ indeterminates
$\xi_1,\xi_2,\ldots,\xi_n$. There are four simple Lie superalgebras
consisting of its derivations, which are listing below:

\begin{equation}\label{4.1}
  W(n):=\{\sum_{j=1}^nf_j\frac{\partial}{\partial{\xi_j}}|
  f_j\in\Lambda(n)\}\quad (n\geq3);
\end{equation}
\begin{equation}\label{4.2}
  S(n):=\{\sum_{j=1}^nf_j\frac{\partial}{\partial{\xi_j}}|
  f_j\in\Lambda(n),\ \sum_{j=1}^n\frac{\partial
  f_j}{\partial\xi_j}=0\}\quad(n\geq3);
\end{equation}
\begin{eqnarray}\label{4.3}
  \widetilde{S}(n):=\{(1-\xi_1\xi_2\cdots\xi_n)\sum_{j=1}^nf_j\frac{\partial}{\partial{\xi_j}}|
  f_j\in\Lambda(n),\ \sum_{j=1}^n\frac{\partial
  f_j}{\partial\xi_j}=0\}\\\nonumber (n\geq 4 \mbox{ is an even number});
\end{eqnarray}and
\begin{equation}\label{4.4}
  H(n):=\{\sum_{j=1}^n\frac{\partial
  f_j}{\partial\xi_j}\frac{\partial}{\partial\xi_j}|
  f_j\in\Lambda(n)\}\quad (n\geq 4).
\end{equation}

There is a natural $\mathbb{Z}$-gradation
$W(n)=\bigoplus_{j=-1}^{n-1}W(n)_j$ via setting $\mbox{deg}\xi_i=1$
and $\mbox{deg}\frac{\partial}{\partial\xi_i}=-1$ for any
$i=1,2,\ldots,n$, which is called the \emph{principal gradation}. It
induces the $\mathbb{Z}_2$-gradation on $W(n)$:
\begin{equation}\label{2.1}
W(n)_{\bar{s}}:=\bigoplus_{j\equiv s(\mbox{mod}\ 2)}W(n)_j\ \ \ \
\mbox{for}\ \ s\in\{0,1\}.
\end{equation}

The $\mathbb{Z}_2$-gradation of $S(n)$, $\widetilde{S}(n)$ and
$H(n)$ is inherited from $W(n)$. Furthermore, the algebras $S(n)$
and $H(n)$ are $\mathbb{Z}$-graded subalgebras of $W(n)$. The
subalgebra $\widetilde{S}(n)$ is no longer $\mathbb{Z}$-graded. But
it has the following vector spaces direct sum decomposition in
$W(n)$:
\begin{equation}\label{2.2}
  \widetilde{S}(n)=\bigoplus_{j=-1}^{n-2}\widetilde{S}(n)_j,
\end{equation} where
$\widetilde{S}(n)_{-1}=\mbox{span}\{(1-\xi_1\xi_2\cdots\xi_n)\frac{\partial}{\partial\xi_i}|\
i=1,\ldots,n\}$ and $\widetilde{S}(n)_j=S(n)_j$ for $j>-1$.

The $0$-degree component of these superalgebras are Lie algebras.
Precisely, $W(n)_0\cong\ \mathfrak{gl}(n)$,
$S(n)_0=\widetilde{S}(n)_0\cong\ \mathfrak{sl}(n)$ and $H(n)_0\cong\
\mathfrak{so}(n)$. The other degree subspaces are all their
irreducible modules. In particular, the $(-1)$-degree subspace is a
nontrivial module.

The following proposition, called the \emph{transitivity} of $W(n)$,
$S(n)$ and $H(n)$, will be used in the Section 4.
\begin{proposition}{\bf(c.f.\cite{FSS})}
Let $\mathfrak{g}=W(n)$, $S(n)$ or $H(n)$. For any
$a\in\mathfrak{g}$, if $[a,\mathfrak{g}_{-1}]=0$ then $a\in
\mathfrak{g}_{-1}$.
\end{proposition}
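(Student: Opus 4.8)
The plan is to exploit the $\mathbb{Z}$-grading directly, reducing the statement to an elementary bracket computation against the partial derivatives $\frac{\partial}{\partial\xi_k}$. Since each of $\mathfrak{g}=W(n)$, $S(n)$, $H(n)$ is a $\mathbb{Z}$-graded (sub)algebra of $W(n)$ with $[\mathfrak{g}_i,\mathfrak{g}_j]\subseteq\mathfrak{g}_{i+j}$, I would first write $a=\sum_{j\geq-1}a_j$ with $a_j\in\mathfrak{g}_j$ and observe that $[a_j,\mathfrak{g}_{-1}]\subseteq\mathfrak{g}_{j-1}$ lands in distinct homogeneous subspaces for distinct $j$. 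Hence $[a,\mathfrak{g}_{-1}]=0$ forces $[a_j,\mathfrak{g}_{-1}]=0$ for every $j$ separately, and it suffices to show that $a_j=0$ whenever $j\geq0$, so that only the degree $-1$ component survives.

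Next I would pin down $\mathfrak{g}_{-1}$. In all three cases the degree $-1$ component equals $\mathrm{span}\{\frac{\partial}{\partial\xi_1},\ldots,\frac{\partial}{\partial\xi_n}\}=W(n)_{-1}$: for $S(n)$ the divergence condition is vacuous on constant-coefficient fields, and for $H(n)$ the Hamiltonian fields attached to the linear forms $\xi_i$ are precisely the $\frac{\partial}{\partial\xi_i}$. Writing a homogeneous $a_j=\sum_{i=1}^n f_i\frac{\partial}{\partial\xi_i}$ with $f_i\in\Lambda(n)$ of degree $j+1$, the core step is the bracket computation inside $W(n)$. Using the graded Leibniz rule together with the anticommutativity $\frac{\partial}{\partial\xi_i}\frac{\partial}{\partial\xi_k}=-\frac{\partial}{\partial\xi_k}\frac{\partial}{\partial\xi_i}$, the second-order terms cancel and one is left with
\begin{equation*}
\Big[\,a_j,\ \frac{\partial}{\partial\xi_k}\,\Big]=-(-1)^{|a_j|}\sum_{i=1}^n\frac{\partial f_i}{\partial\xi_k}\frac{\partial}{\partial\xi_i}.
\end{equation*}

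From this formula the conclusion follows at once: $[a_j,\frac{\partial}{\partial\xi_k}]=0$ for all $k$ forces $\frac{\partial f_i}{\partial\xi_k}=0$ for all $i,k$, i.e.\ every $f_i$ is a constant. A homogeneous field of degree $j$ with constant coefficients must have $j=-1$, so for $j\geq0$ we obtain $f_i=0$ and hence $a_j=0$. Therefore $a=a_{-1}\in W(n)_{-1}$, and since $W(n)_{-1}=\mathfrak{g}_{-1}$ also for $S(n)$ and $H(n)$, we conclude $a\in\mathfrak{g}_{-1}$ in every case. The only delicate point I expect is the sign bookkeeping in the super Leibniz rule that produces the displayed identity and the cancellation of its second-order part; once that formula is in hand the argument is purely formal. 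It is worth noting, as a consistency check, that $\mathfrak{g}_{-1}$ is itself abelian, so its elements do satisfy the hypothesis, in agreement with the conclusion.
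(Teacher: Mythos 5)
Your proposal is correct, and in fact it supplies a proof where the paper gives none: the paper states this transitivity proposition only with a citation to the dictionary of Frappat--Sciarrino--Sorba, so there is no internal argument to compare against. Your reduction is sound on every point. The $\mathbb{Z}$-grading of $W(n)$, $S(n)$, $H(n)$ with $[\mathfrak{g}_j,\mathfrak{g}_{-1}]\subseteq\mathfrak{g}_{j-1}$ legitimately splits the hypothesis $[a,\mathfrak{g}_{-1}]=0$ into homogeneous pieces; the identification $\mathfrak{g}_{-1}=W(n)_{-1}=\mathrm{span}\{\frac{\partial}{\partial\xi_1},\ldots,\frac{\partial}{\partial\xi_n}\}$ holds in all three cases (the paper itself uses $S(n)_{-1}=W(n)_{-1}$ and $H(n)_{-1}=W(n)_{-1}$ in Section 4); and your key formula $[a_j,\frac{\partial}{\partial\xi_k}]=-(-1)^{|a_j|}\sum_i\frac{\partial f_i}{\partial\xi_k}\frac{\partial}{\partial\xi_i}$ is right: the second-order terms cancel because $\frac{\partial}{\partial\xi_i}\frac{\partial}{\partial\xi_k}=-\frac{\partial}{\partial\xi_k}\frac{\partial}{\partial\xi_i}$ while $(-1)^{|a_j|}=-(-1)^{|f_i|}$ flips the sign of the term produced by the super Leibniz rule, exactly as you indicate. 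From this, vanishing of all $[a_j,\frac{\partial}{\partial\xi_k}]$ forces every $f_i$ to be a scalar (an element of $\Lambda(n)$ killed by all $\frac{\partial}{\partial\xi_k}$ is constant), which for $j\geq 0$ forces $a_j=0$. One economy worth making explicit: your argument actually proves the statement for $W(n)$ alone, and the cases $S(n)$ and $H(n)$ then follow formally from the single equality $\mathfrak{g}_{-1}=W(n)_{-1}$, since $[a,\mathfrak{g}_{-1}]=[a,W(n)_{-1}]=0$ puts $a\in W(n)_{-1}=\mathfrak{g}_{-1}$; you never need the divergence or Hamiltonian constraints on the higher-degree components at all.
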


\section{Proof of the Main Theorem for Classical Lie Superalgebras}
In this section, we first prove that
$\sigma|_{\mathfrak{g}_{\bar{0}}}=\mbox{id}$ for any Hom-Lie
superalgebra structure $(\mathfrak{g},\sigma)$ on a classical Lie
superalgebra $\mathfrak{g}$. Then thanks to the classification for
classical simple Lie superalgebras due to Kac \cite{K}, we can prove
the main theorem case by case.

\subsection{Hom-Lie algebra structure
on $\mathfrak{g}_{\bar{0}}$} The following lemma is clear by the
definition.

\begin{lemma} If $(\mathfrak{g},\sigma)$ is a Hom-Lie superalgebra
structure on the Lie superalgebra $\mathfrak{g}$, then
$(\mathfrak{g}_{\bar{0}},\sigma|_{\mathfrak{g}_{\bar{0}}})$ is a
Hom-Lie algebra structure on the Lie algebra
$\mathfrak{g}_{\bar{0}}$.
\end{lemma}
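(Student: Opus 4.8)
The plan is to verify directly that the restriction $(\mathfrak{g}_{\bar{0}},\sigma|_{\mathfrak{g}_{\bar{0}}})$ satisfies the three defining axioms of a Hom-Lie algebra, using only that $(\mathfrak{g},[-,-],\sigma)$ is already known to be a Hom-Lie superalgebra. The essential observation is that for a Lie superalgebra the even part $\mathfrak{g}_{\bar{0}}$ is a subalgebra: the bracket $[-,-]$ respects the $\mathbb{Z}_2$-grading, so $[\mathfrak{g}_{\bar{0}},\mathfrak{g}_{\bar{0}}]\subseteq\mathfrak{g}_{\bar{0}}$, and the restricted bracket is thus well defined as a map $\mathfrak{g}_{\bar{0}}\times\mathfrak{g}_{\bar{0}}\to\mathfrak{g}_{\bar{0}}$. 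Likewise, because $\sigma$ is an even linear map it preserves the grading, so $\sigma(\mathfrak{g}_{\bar{0}})\subseteq\mathfrak{g}_{\bar{0}}$ and $\sigma|_{\mathfrak{g}_{\bar{0}}}$ is a genuine linear endomorphism of $\mathfrak{g}_{\bar{0}}$. These two closure facts are what make the restriction meaningful at all, and I would state them first.

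Next I would check each axiom by specializing the superalgebra identities to homogeneous elements of degree $\bar{0}$. For $x,y\in\mathfrak{g}_{\bar{0}}$ we have $|x|=|y|=\bar{0}$, so every sign factor $(-1)^{|x||y|}$, $(-1)^{|x||z|}$, and so on collapses to $+1$. Applying this to the multiplicativity relation gives $\sigma[x,y]=[\sigma(x),\sigma(y)]$ unchanged, which is exactly multiplicativity for the restricted structure. The graded skew-symmetry $[x,y]=-(-1)^{|x||y|}[y,x]$ becomes the ordinary skew-symmetry $[x,y]=-[y,x]$. Finally, taking $x,y,z\in\mathfrak{g}_{\bar{0}}$ in the graded Hom-Jacobi identity~\eqref{Jacobi}, all three sign prefactors become $+1$ and the identity reduces to
\begin{equation}
[\sigma(x),[y,z]]+[\sigma(y),[z,x]]+[\sigma(z),[x,y]]=0,
\end{equation}
which is precisely the (ungraded) Hom-Jacobi identity for a Hom-Lie algebra. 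Since the bracket is closed on $\mathfrak{g}_{\bar{0}}$, each inner bracket $[y,z]$, $[z,x]$, $[x,y]$ lands in $\mathfrak{g}_{\bar{0}}$, so every term of this sum is computed entirely within $\mathfrak{g}_{\bar{0}}$ and the identity holds there.

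Because all three Hom-Lie axioms hold for $(\mathfrak{g}_{\bar{0}},[-,-]|_{\mathfrak{g}_{\bar{0}}},\sigma|_{\mathfrak{g}_{\bar{0}}})$, the triple is a Hom-Lie algebra, and since its bracket is the restriction of the Lie bracket of $\mathfrak{g}_{\bar{0}}$, it is by definition a Hom-Lie algebra structure on the Lie algebra $\mathfrak{g}_{\bar{0}}$. There is really no serious obstacle here: the proof is a routine specialization in which the only thing to watch is the bookkeeping of the sign factors and the closure of the even part under both the bracket and $\sigma$. The mildest subtlety worth flagging is simply confirming that no mixed-parity term survives — that is, one should note that the axioms are being applied only to homogeneous even inputs so that the vanishing of the odd-degree signs is automatic, and that the identities then extend to all of $\mathfrak{g}_{\bar{0}}$ by bilinearity.
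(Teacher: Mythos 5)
Your proof is correct and is exactly the routine verification the paper has in mind: the paper dismisses this lemma with ``clear by the definition,'' and your argument simply writes out that check in full (closure of $\mathfrak{g}_{\bar{0}}$ under the bracket and under the even map $\sigma$, then specialization of the three graded axioms to even elements, where all signs $(-1)^{|x||y|}$ become $+1$). Nothing is missing and no different idea is involved.
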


Below is a theorem obtained directly by main results in \cite{JL}.
It will be used to study the Hom-Lie superalgebra structures on a
classical Lie superalgebra.

\begin{theorem}{\bf (Jin-Li\cite{JL})}
If $\mathfrak{g}$ is a finite-dimensional semisimple Lie algebra and
$\sigma$ is an automorphism of $\mathfrak{g}$, then
$(\mathfrak{g},\sigma)$ is a Hom-Lie algebra if only if
$\sigma=\mbox{\emph{id}}$.
\end{theorem}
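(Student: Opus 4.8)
The \emph{if} direction is immediate: when $\sigma=\mathrm{id}$ the Hom-Jacobi identity collapses to the ordinary Jacobi identity, so $(\mathfrak{g},\mathrm{id})$ is a Hom-Lie algebra. For the converse I would lean on the structure theory of semisimple Lie algebras, first reducing to the simple case. Writing $\mathfrak{g}=\bigoplus_i\mathfrak{g}_i$ as a sum of simple ideals, the automorphism $\sigma$ permutes the $\mathfrak{g}_i$; if some $\mathfrak{g}_i$ were mapped onto a different ideal $\mathfrak{g}_j$, then feeding $x\in\mathfrak{g}_i$ together with $y,z\in\mathfrak{g}_j$ into the Hom-Jacobi identity would kill the last two terms (since $[z,x]=[x,y]=0$) and leave $[\sigma x,[y,z]]=0$; as $\sigma x$ ranges over all of $\mathfrak{g}_j$ and $[y,z]$ over $[\mathfrak{g}_j,\mathfrak{g}_j]=\mathfrak{g}_j$, this forces $\mathfrak{g}_j$ to be abelian, a contradiction. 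Hence $\sigma$ preserves each $\mathfrak{g}_i$ and the identity restricts to every factor, so it suffices to treat $\mathfrak{g}$ simple.

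A short computation shows that the Hom-Jacobi condition is invariant under replacing $\sigma$ by a conjugate $\varphi\sigma\varphi^{-1}$ with $\varphi\in\mathbf{Aut}\,\mathfrak{g}$: substituting $x\mapsto\varphi(x)$, $y\mapsto\varphi(y)$, $z\mapsto\varphi(z)$ and using multiplicativity turns the identity for $\varphi\sigma\varphi^{-1}$ into $\varphi$ applied to the identity for $\sigma$. Since the conclusion $\sigma=\mathrm{id}$ is likewise conjugation-invariant, I would put $\sigma$ in normal form, assuming by standard structure theory that it stabilizes a fixed Cartan subalgebra $\mathfrak{h}$. Then $\sigma$ permutes the root spaces, $\sigma(\mathfrak{g}_\alpha)=\mathfrak{g}_{\nu\alpha}$ and $\sigma(e_\alpha)=c_\alpha e_{\nu\alpha}$ for scalars $c_\alpha\in\mathbb{C}^\times$, where $\nu$ is the induced permutation of the root system $\Phi$ and $\sigma|_{\mathfrak{h}}$ is determined dually.

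The plan is then to extract constraints by plugging homogeneous triples into the Hom-Jacobi identity. A triple $(h,h',e_\alpha)$ of two Cartan elements and a root vector yields $\alpha(h')\,\alpha(\sigma h)=\alpha(h)\,\alpha(\sigma h')$ for all $h,h'\in\mathfrak{h}$, which forces every root to be an eigenvector of the dual of $\sigma|_{\mathfrak{h}}$; by irreducibility of $\Phi$ the eigenvalue is a single constant $\lambda$, and since $\nu$ must preserve $\Phi$ one gets $\lambda=\pm1$, that is $\nu=\mathrm{id}$ or $\nu=-\mathrm{id}$. A triple $(e_\alpha,e_\beta,h)$ with $\alpha+\beta\in\Phi$ produces a term living in $\mathfrak{g}_{\alpha+\beta}$ that cannot be cancelled by the other two (which lie in $\mathfrak{g}_{\beta-\alpha}$ and $\mathfrak{g}_{\alpha-\beta}$) when $\nu=-\mathrm{id}$, ruling that branch out. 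Finally, with $\nu=\mathrm{id}$, triples $(e_\alpha,e_\beta,e_\gamma)$ of root vectors with $\alpha+\beta+\gamma=0$ show that all $c_\alpha$ share a common value $c$, and applying multiplicativity to $[e_\alpha,e_\beta]$ with $\alpha+\beta\in\Phi$ gives $c=c^2$, whence $c=1$ and $\sigma=\mathrm{id}$.

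The main obstacle I anticipate is the final pinning of the scalars. The Hom-Jacobi identity only constrains $\sigma$ up to an overall scalar on the root vectors, and collapsing this ambiguity to the exact value $1$ rests on the additive geometry of $\Phi$, specifically the existence of roots $\alpha,\beta$ with $\alpha+\beta\in\Phi$; this is available for simple algebras of rank $\geq 2$ but degenerates in the rank-one case, so that case requires separate handling. A secondary technical point is justifying the normal-form reduction for an automorphism that is not \emph{a priori} semisimple, which I would address by first arguing that the twisting map may be taken to stabilize a Cartan subalgebra.
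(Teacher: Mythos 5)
Your route is necessarily different from the paper's: the paper offers no independent proof of this theorem at all --- its ``proof'' is a two-line appeal to Corollary 3.1 and Theorem 3.1 of \cite{JL}. Judged on its own terms, your reduction to a single simple ideal is correct, and the rank-two-or-higher root-space analysis is sound in outline, apart from a repairable slip at the very end: triples with $\alpha+\beta+\gamma=0$ only equalize the scalars \emph{within one triangle}, and in type $A_2$ the six roots form two disjoint triangles, so the constraints you actually invoke at that stage (triangles plus multiplicativity) still admit the non-identity automorphism $c_\alpha=c_\beta=c_{-\alpha-\beta}=\omega$, $c_{-\alpha}=c_{-\beta}=c_{\alpha+\beta}=\omega^2$ with $\omega^3=1$. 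To finish one also needs triples such as $(e_\alpha,e_{-\alpha},h)$ (which give $c_\alpha=c_{-\alpha}$), or $(e_\alpha,e_\beta,h)$ with $\alpha+\beta\in\Phi$ used in the $\nu=\mathrm{id}$ branch (which give $c_\alpha=c_\beta=1$ outright).

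The two issues you defer, however, are genuine gaps, and the second is fatal. (a) The normal form does not exist: an automorphism of a complex semisimple Lie algebra need not stabilize any Cartan subalgebra. If $e$ is a regular nilpotent element, then $\exp(\mathrm{ad}\,e)$ is unipotent, so every eigenvector of it is a fixed vector and hence lies in $\ker(\mathrm{ad}\,e)$, which consists of nilpotent elements; an invariant Cartan subalgebra, being a nonzero invariant subspace of a unipotent operator, would contain a nonzero fixed vector, i.e.\ a nonzero semisimple element of $\ker(\mathrm{ad}\,e)$ --- impossible. (Already in $\mathfrak{sl}_2$ the only eigenvectors of $\exp(\mathrm{ad}\,e)$ are the multiples of $e$.) Your proposed remedy merely restates the claim, so it is circular. (b) The rank-one case is not a degenerate case awaiting ``separate handling'': the statement is \emph{false} there under the definitions used in this paper. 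In $\mathfrak{sl}_2$, with $[h,e]=2e$, $[h,f]=-2f$, $[e,f]=h$, take the inner automorphism $\sigma(h)=-h$, $\sigma(e)=-f$, $\sigma(f)=-e$. The expression $J(x,y,z)=[\sigma(x),[y,z]]+[\sigma(y),[z,x]]+[\sigma(z),[x,y]]$ is trilinear, cyclic, and vanishes when two arguments coincide, hence is alternating; since $\dim\mathfrak{sl}_2=3$ it vanishes identically if and only if $J(e,f,h)=0$, and indeed $J(e,f,h)=[-f,2f]+[-e,2e]+[-h,h]=0$. So $(\mathfrak{sl}_2,\sigma)$ is a Hom-Lie algebra with $\sigma$ an automorphism distinct from $\mathrm{id}$ (the same holds for every involutive inner automorphism of $\mathfrak{sl}_2$). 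Hence this gap cannot be closed by any argument, and your own reduction funnels every $\mathfrak{sl}_2$-summand of a semisimple algebra into precisely this case. What your attempt has actually uncovered is that the theorem as recorded needs a restriction excluding $\mathfrak{sl}_2$-summands (or a different definition in \cite{JL} than the one used here) --- a caveat that also affects the paper's use of it in Corollary 3.3 for superalgebras whose even part contains $\mathfrak{sl}_2$-factors, such as $\mathfrak{osp}(m|2)$, $D(2,1,\alpha)$, $G(3)$ and $F(4)$.
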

\begin{proof}
If $\mathfrak{g}$ is a finite-dimensional Lie algebra with
non-isomorphic simple summands, then $\sigma=\mbox{id}$ by the
Corollary 3.1 in \cite{JL}.

When $\mathfrak{g}$ has isomorphic simple summands, the Theorem 3.1
in \cite{JL} implies that for any non-trivial hom-Lie algebra
$(\mathfrak{g},\sigma)$ on $\mathfrak{g}$, the homomorphism $\sigma$
cannot be an automorphism because a projection homomorphism appears
as its factor.
\end{proof}

\begin{corollary}
For any classical simple Lie superalgebra $\mathfrak{g}$, if
$(\mathfrak{g},\sigma)$ is a Hom-Lie superalgebra, then
$\sigma|_{\mathfrak{g}_{\bar{0}}}=\mbox{\emph{id}}$.
\end{corollary}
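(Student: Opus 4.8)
The plan is to push the whole statement back to the Jin--Li theorem through the even part, handling the reductive (non-semisimple) cases by an extra eigenvalue analysis on $\mathfrak{g}_{\bar{1}}$. Since $\sigma$ is, by our standing convention, an automorphism of the simple Lie superalgebra $\mathfrak{g}$ and is even, it preserves $\mathfrak{g}_{\bar{0}}$ and restricts there to a Lie algebra automorphism; by the preceding Lemma, $(\mathfrak{g}_{\bar{0}},\sigma|_{\mathfrak{g}_{\bar{0}}})$ is a Hom-Lie algebra. For a classical simple Lie superalgebra $\mathfrak{g}_{\bar{0}}$ is reductive, so I write $\mathfrak{g}_{\bar{0}}=\mathfrak{s}\oplus\mathfrak{z}$ with $\mathfrak{s}=[\mathfrak{g}_{\bar{0}},\mathfrak{g}_{\bar{0}}]$ semisimple and $\mathfrak{z}$ the center, which is at most one-dimensional. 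As $\sigma$ is an automorphism it stabilizes both $\mathfrak{s}$ and $\mathfrak{z}$, and the restriction $(\mathfrak{s},\sigma|_{\mathfrak{s}})$ of the Hom-Lie algebra to the $\sigma$-stable semisimple subalgebra $\mathfrak{s}$ is again a Hom-Lie algebra. Hence the Jin--Li theorem yields $\sigma|_{\mathfrak{s}}=\mbox{id}$. For all types whose even part is semisimple, i.e. $\mathfrak{z}=0$ (the orthosymplectic $B(m,n),D(m,n)$, the exceptional $D(2,1;\alpha),F(4),G(3)$, the strange $P(n),Q(n)$ and $A(n,n)$), this already gives $\sigma|_{\mathfrak{g}_{\bar{0}}}=\mbox{id}$.

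The types left over are $A(m,n)$ with $m\neq n$ and $C(n)$, where $\mathfrak{z}=\mathbb{C}c$ is one-dimensional; here $\sigma(c)=\lambda c$ for some $\lambda\in\mathbb{C}^{\times}$, and the entire issue is to prove $\lambda=1$. My first step is an eigenvalue argument. The central element $c$ acts on $\mathfrak{g}_{\bar{1}}$ semisimply with exactly the two nonzero eigenvalues $\pm\mu$, whose eigenspaces $\mathfrak{g}_{\bar{1}}^{+}$ and $\mathfrak{g}_{\bar{1}}^{-}$ are the two off-diagonal blocks of $\mathfrak{g}_{\bar{1}}$. Applying multiplicativity to $[c,x]$ with $x\in\mathfrak{g}_{\bar{1}}$ gives $\sigma\circ\mbox{ad}(c)=\lambda\,\mbox{ad}(c)\circ\sigma$, so $\sigma\,\mbox{ad}(c)\,\sigma^{-1}=\lambda\,\mbox{ad}(c)$; conjugate operators have equal spectra, whence $\{\mu,-\mu\}=\{\lambda\mu,-\lambda\mu\}$ and therefore $\lambda\in\{1,-1\}$.

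The crux, and the step I expect to be the main obstacle, is excluding $\lambda=-1$: this value is realized by an honest automorphism of $\mathfrak{g}$ (for $C(n)$ it is induced, for instance, by a reflection in $O(2)\setminus SO(2)$ acting on the $\mathfrak{so}(2)$-center), so it can only be killed by the graded Hom-Jacobi identity, not by the mere fact that $\sigma$ is an automorphism. Suppose $\lambda=-1$. Then $\sigma$ interchanges $\mathfrak{g}_{\bar{1}}^{+}$ and $\mathfrak{g}_{\bar{1}}^{-}$, and since $\sigma|_{\mathfrak{s}}=\mbox{id}$ it does so as an $\mathfrak{s}$-module isomorphism $\mathfrak{g}_{\bar{1}}^{+}\cong\mathfrak{g}_{\bar{1}}^{-}$. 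For $A(m,n)$ these two eigenspaces are the mutually dual modules built from the standard representations of the two simple factors of $\mathfrak{s}$, and they are non-isomorphic $\mathfrak{s}$-modules except precisely for $\mathfrak{sl}(2|1)$; this already contradicts $\lambda=-1$ for every such $\mathfrak{g}$ other than $\mathfrak{sl}(2|1)$. For the residual subcases $C(n)$ and $\mathfrak{sl}(2|1)$, where $\mathfrak{g}_{\bar{1}}^{+}\cong\mathfrak{g}_{\bar{1}}^{-}$ as $\mathfrak{s}$-modules, I would feed the graded Hom-Jacobi identity \eqref{Jacobi} with $x=c$ and $y,z\in\mathfrak{g}_{\bar{1}}^{+}$: since $c$ is central in $\mathfrak{g}_{\bar{0}}$ the term $[\sigma(c),[y,z]]$ vanishes, and the identity reduces to $[\sigma(y),z]+[\sigma(z),y]=0$; putting $z=y$ gives $[\sigma(y),y]=0$. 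But $\sigma(y)\in\mathfrak{g}_{\bar{1}}^{-}$ and $y\in\mathfrak{g}_{\bar{1}}^{+}$, and a direct matrix computation shows $[\mathfrak{g}_{\bar{1}}^{-},\mathfrak{g}_{\bar{1}}^{+}]\neq0$ while $[\mathfrak{g}_{\bar{1}}^{+},\mathfrak{g}_{\bar{1}}^{+}]=0$, so $[\sigma(y),y]\neq0$ whenever $y\neq0$, contradicting the invertibility of $\sigma$. This forces $\lambda=1$, so $\sigma|_{\mathfrak{g}_{\bar{0}}}=\mbox{id}$ in every case.
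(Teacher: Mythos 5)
Your proposal is correct in substance, and it is a genuinely different --- and in fact more careful --- argument than the one in the paper. The paper disposes of this corollary in two lines: it asserts that the even part $\mathfrak{g}_{\bar{0}}$ of \emph{every} classical simple Lie superalgebra is semisimple, and then invokes Lemma 3.1 and Theorem 3.2. That assertion is false exactly for the two families you isolate: for $A(m,n)$ with $m\neq n$ one has $\mathfrak{g}_{\bar{0}}\cong\mathfrak{sl}_m\oplus\mathfrak{sl}_n\oplus\mathbb{C}$, and for $C(n)=\mathfrak{osp}(2|2n-2)$ one has $\mathfrak{g}_{\bar{0}}\cong\mathbb{C}\oplus\mathfrak{sp}_{2n-2}$; in both cases $\mathfrak{g}_{\bar{0}}$ is reductive with a one-dimensional center, and the Jin--Li theorem says nothing about how $\sigma$ acts on that center. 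Your diagnosis that this cannot be fixed inside $\mathfrak{g}_{\bar{0}}$ is also right: on $\mathfrak{s}\oplus\mathbb{C}c$ the Hom-Jacobi identity for even triples never sees the scalar $\lambda$ in $\sigma(c)=\lambda c$, and $\lambda=-1$ is realized by honest automorphisms of $\mathfrak{g}$ (e.g.\ $\mbox{Ad}(\gamma_2,I_{2n-2})$ for $C(n)$), so only the graded Hom-Jacobi identity with odd entries can force $\lambda=1$. Thus your chain --- Jin--Li applied to $\mathfrak{s}=[\mathfrak{g}_{\bar{0}},\mathfrak{g}_{\bar{0}}]$, the spectral argument $\sigma\,\mbox{ad}(c)\,\sigma^{-1}=\lambda\,\mbox{ad}(c)$ giving $\lambda=\pm1$, the $\mathfrak{s}$-module comparison of the eigenspaces $\mathfrak{g}_{\bar{1}}^{\pm}$, and the Hom-Jacobi computation in the residual cases $\mathfrak{sl}(2|1)$ and $C(n)$ --- is not redundant extra caution; it supplies a proof of the corollary where the paper's own proof has a gap (a gap that propagates, since Sections 3.2 and 3.7 use the corollary for precisely these types).

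One step in your write-up needs tightening. At the end you infer ``$[\sigma(y),y]\neq0$ whenever $y\neq0$'' from the space-level statement $[\mathfrak{g}_{\bar{1}}^{-},\mathfrak{g}_{\bar{1}}^{+}]\neq0$; that inference is not valid as written, since a nonzero bilinear pairing can still vanish on particular pairs of nonzero vectors. What you actually need is the pointwise claim: in $\mathfrak{sl}(2|1)$ and in $\mathfrak{osp}(2|2n-2)$, $[w,y]\neq0$ for \emph{every} pair of nonzero elements $w\in\mathfrak{g}_{\bar{1}}^{-}$, $y\in\mathfrak{g}_{\bar{1}}^{+}$. This is true and is indeed a direct matrix computation; for instance, in $\mathfrak{sl}(2|1)$,
\begin{equation*}
[\,a e_{3,1}+b e_{3,2},\; c e_{1,3}+d e_{2,3}\,]
= ac\,(e_{1,1}+e_{3,3})+ad\,e_{2,1}+bc\,e_{1,2}+bd\,(e_{2,2}+e_{3,3}),
\end{equation*}
which vanishes only if $ac=ad=bc=bd=0$, i.e.\ only if $(a,b)=(0,0)$ or $(c,d)=(0,0)$; the analogous computation for $\mathfrak{osp}(2|2n-2)$, using the symplectic pairing on the $\mathfrak{sp}_{2n-2}$-component of the bracket, gives the same conclusion. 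With that pointwise statement in place, $[\sigma(y),y]=0$ together with $\sigma(y)\neq0$ is the contradiction you want, and the rest of your argument stands.
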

\begin{proof}
For any classical simple Lie superalgebra $\mathfrak{g}$, its even
part $\mathfrak{g}_{\bar{0}}$ is a semisimple Lie algebra. Thus the
statement follows from Lemma 3.1 and Theorem 3.2.
\end{proof}

\subsection{Hom-Lie superalgebra structures on $\mathfrak{sl}(m|n), (m,n\geq1, m\not=n)$}
\

The special linear Lie superalgebra $\mathfrak{sl}(m|n)$ consists of
those matrices $\left(\begin{array}{cc}A&B\\C&D
\end{array}\right)\in\mathfrak{gl}(m|n)$ such that
$\mbox{tr}A-\mbox{tr}D=0$.

It has been shown in \cite{S,GP} that
$\textbf{Aut}\mathfrak{sl}(m|n)$ is generated by
$\mbox{Ad}(SL_m\times SL_n)$, $\jmath(\mathbb{C}^\times)$ and
$\tau$.

If $(\mathfrak{sl}(m|n),\sigma)$ is a Hom-Lie superalgebra, then
$\sigma|_{\mathfrak{sl}(m|n)_{\bar{0}}}=\mbox{id}$ by Corollary 3.3,
where $\mathfrak{sl}(m|n)_{\bar{0}}$ consists of those matrices
$\left(\begin{array}{cc}A&0\\0&D
\end{array}\right)\in\mathfrak{sl}(m|n)$. So it should be that $\sigma=\jmath(\lambda)$ for some
$\lambda\in\mathbb{C}^\times$ via (2.2)-(2.7) and (2.12)-(2.14).

Below we always use $e_{i,j}\in\mathfrak{gl}(m|n)$ to denote the
matrx whose $(i,j)$-th entry is $1$ and other entries are $0$. Set
$x=e_{1,m+1}$, $y=e_{1,2}$ and $z=e_{2,1}$ in \eqref{Jacobi}, we
have that $\lambda[x,[y,z]]=[x,[y,z]]$. Thus it must be that
$\lambda=1$ and hence $\sigma=\jmath(1)=\mbox{id}$.

\subsection{Hom-Lie superalgebra structure on $\mathfrak{psl}(n|n), (n>2)$}
\

Recall that
\begin{equation}
\mathfrak{psl}(n|n):=\mathfrak{sl}(n|n)/\{\lambda
I_{2n}|\lambda\in\mathbb{C}\}.
\end{equation} The automorphism group
$\textbf{Aut}\mathfrak{psl}(n|n)$ is generated by
$\mbox{Ad}(SL_n\times SL_n)$, $\jmath(\mathbb{C}^\times)$, $\tau$
and $\pi$ (c.f. \cite{S,GP}).

Also if $(\mathfrak{psl}(n|n),\sigma)$ is a Hom-Lie superalgebra,
then $\sigma=\jmath(\lambda)$ for some $\lambda\in\mathbb{C}^\times$
via Corollary 3.3, and, (2.2)-(2.7) and (2.12)-(2.17). Thus by the
arguments as the same as the case of $\mathfrak{sl}(m|n)$, we have
$\sigma=\jmath(1)=\mbox{id}$.

\subsection{Hom-Lie superalgebra structure on $\mathfrak{psl}(2|2)$}
\

It was proved in \cite{S,GP} that $\textbf{Aut}\mathfrak{psl}(2|2)$
is generated by $\mbox{Ad}(SL_2\times SL_2)$, $\rho(SL_2)$ and
$\pi$.

Now if $(\mathfrak{psl}(2|2),\sigma)$ is a Hom-Lie superalgebra,
then $\sigma=\rho\left(\begin{array}{cc}a&b\\c&d
\end{array}\right)$ for some $\left(\begin{array}{cc}a&b\\c&d
\end{array}\right)\in SL_2$ by Corollary 3.3, and, (2.2),(2.3),(2.8)-(2.11),(2.15),(2.18) and (2.19).

Now, let $x=e_{2,3}$, $y=e_{1,2}$ and $z=e_{2,1}$, then
$\sigma(x)=ae_{2,3}-ce_{4,1}$, $\sigma(y)=y$ and $\sigma(z)=z$. Thus
$[\sigma(x),[y,z]]=[x,[y,z]]$. This implies that $a=1$ and $c=0$.
Similarly, one sets $x=e_{3,2}$, $y=e_{3,4}$ and $z=e_{4,3}$, and
shows that $d=1$ and $c=0$ via $[\sigma(x),[y,z]]=[x,[y,z]]$ again.

\subsection{Hom-Lie superalgebra structure on $P(n-1)$}
\

The simple Lie superalgebra $P(n-1)$ is a subsuperalgebra of
$\mathfrak{sl}(n|n)$, consisting of the matrices of the following
form.
\begin{equation}
P(n-1)=\left\{\left(\begin{array}{cc}A&B\\C&-A^t\end{array}\right)|A\in\mathfrak{sl}_n,
B=B^t, C=-C^t\right\}
\end{equation}

There is a group homomorphism
\begin{equation}
\mbox{Ad}: SL_n\rightarrow \textbf{Aut}P(n-1),\quad\quad
X\mapsto\mbox{Ad}(X,(X^t)^{-1}).
\end{equation}

Automorphism group $\textbf{Aut}P(n-1)$ is generated by
$\mbox{Ad}(SL_n)$ and $\jmath(\mathbb{C}^\times)$.

A plausible automorphism $\sigma$ with $(P(n-1),\sigma)$ being a
Hom-Lie superalgebra,  should be that $\sigma=\jmath(\lambda)$ for
some $\lambda\in\mathbb{C}^\times$ by Corollary 3.3, and,
(2.2)-(2.5) and (2.12).

Let $x=e_{1,1}-e_{2,2}-e_{n+1,n+1}+e_{n+2,n+2}$, $y=e_{1,2}$ and
$z=e_{1,n+2}+e_{2,n+1}$. Then $\sigma(x)=x$, $\sigma(y)=y$ and
$\sigma(z)=\lambda z$. One has $\lambda[[x,y],z]=[x,y],z]$ by
$\sigma$-twisted Jacobi identity. Hence
$\lambda[e_{1,2},e_{1,n+2}+e_{2,n+1}]=[e_{1,2},e_{1,n+2}+e_{2,n+1}]$.
This shows that $\lambda=1$, i.e. $\sigma=\mbox{id}$.

\subsection{Hom-Lie superalgebra structure on $Q(n)$}
\

First we denote $\tilde{Q}(n-1)$ is the subsuperalgebra of
$\mathfrak{sl}(n|n)$ given by
\begin{equation}
\tilde{Q}(n-1):=\{\left(\begin{array}{cc}A&B\\B&A\end{array}\right)|\mbox{tr}B=0\}.
\end{equation}
The simple Lie superalgebra $Q(n-1)$ is the quotient
\begin{equation}
Q(n-1):=\tilde{Q}(n-1)/\{\lambda I_{2n}|\lambda\in\mathbb{C}\}.
\end{equation}

This simple Lie superalgebra is not invariant under the
supertransposition $\tau$, but it is so under the
$q$-supertransposition
\begin{equation}
\sigma_q:\left(\begin{array}{cc}A&B\\B&A\end{array}\right)\mapsto\left(\begin{array}{cc}A^t&\zeta
B^t\\\zeta B^t&A^t\end{array}\right),
\end{equation} where $\zeta$ is a fixed primitive $4$-th root of
unity.

There is a group homomorphism
\begin{equation}
\mbox{Ad}: SL_n\rightarrow \textbf{Aut}Q(n-1),\quad\quad
X\mapsto\mbox{Ad}(X,X).
\end{equation}

Automorphism group $\textbf{Aut}Q(n-1)$ is generated by
$\mbox{Ad}(SL_n)$ and $\sigma_q$. We can check easily that
\begin{equation}
\sigma_q^2=\jmath(-1)\quad \mbox{and}\quad \sigma_q^4=1,
\end{equation}
and
\begin{equation}
\sigma_q\mbox{Ad}(X)=\mbox{Ad}((X^t)^{-1})\sigma_q.
\end{equation}

Thus a plausible automorphism $\sigma$ with $(P(n-1),\sigma)$ being
a Hom-Lie superalgebra is $\sigma=\mbox{id}$ or $\sigma_q^2$ by
Corollary 3.3, and, (2.2),(2.3),(3.8) and (3.9). We can show that
$\sigma=\mbox{id}$ by setting $x=e_{1,2}+e_{n+1,n+2}$,
$y=e_{1,1}-e_{2,2}+e_{n+1,n+1}-e_{n+2,n+2}$ and
$z=e_{2,n+1}+e_{n+2,1}$ in the $\sigma$-twisted Jacobi identity
\eqref{Jacobi}.

\subsection{Hom-Lie superalgebra structure on $\mathfrak{osp}(m|2n)$}
Recall that the orthosymplectic Lie superalgebra
$\mathfrak{osp}(m|2n)$ is the subsuperalgebra of
$\mathfrak{sl}(m|2n)$ defined by
\begin{equation}
\mathfrak{osp}(m|2n):=\left\{\left(\begin{array}{cc}A&B\\J_nB^t&D\end{array}\right)|A\in\mathfrak{so}_m,B\in
M_{m,2n},D\in \mathfrak{sp}_{2n}\right\}
\end{equation}where
$J_n=\left(\begin{array}{cc}0&I_n\\-I_n&0\end{array}\right)$.

When $m$ is even, we take $\gamma_m\in O_m$ such that
$\det\gamma_m=-1$ and $\gamma_m^2=I_m$.

The automorphism group $\textbf{Aut}\mathfrak{osp}(m|2n)$ is
generated by $\mbox{Ad}(SO_m\times Sp_{2n})$ if $m$ is odd, and by
$\mbox{Ad}(SO_m\times Sp_{2n})$ and $\mbox{Ad}(\gamma_m,I_{2n})$ if
$m$ is even.

Clearly, $\sigma=\mbox{id}$ if $(\mathfrak{osp}(m|2n),\sigma)$ is a
Hom-Lie superalgebra by Corollary 3.3.

\subsection{Hom-Lie superalgebra structure on $G(3)$}
\

For Lie superalgebra $G(3)$, its even part $G(3)_{\bar{0}}\simeq
G_2\oplus\mathfrak{sl}_2$ and its automorphism group
$\textbf{Aut}G(3)$ is generated by $\mbox{Ad}(G_2\times SL_2)$.
Hence $\sigma=\mbox{id}$ if $(G(3),\sigma)$ is a Hom-Lie
superalgebra by Corollary 3.3.

\subsection{Hom-Lie superalgebra structure on $F(4)$}
\

For Lie superalgebra $F(4)$, its even part $F(4)_{\bar{0}}\simeq
\mathfrak{so}_7\oplus\mathfrak{sl}_2$ and its automorphism group
$\textbf{Aut}F(4)$ is generated by $\mbox{Ad}(\mbox{Spin}_7\times
SL_2)$. Hence $\sigma=\mbox{id}$ if $(F(4),\sigma)$ is a Hom-Lie
superalgebra by Corollary 3.3.

\subsection{Hom-Lie superalgebra structure on $D(2,1,\alpha)$}
\

For Lie superalgebra $\mathfrak{g}=D(2,1,\alpha)$, its even part is
$\mathfrak{g}_{\bar{0}}\simeq\mathfrak{sl}_2\oplus\mathfrak{sl}_2\oplus\mathfrak{sl}_2$,
and its odd part is $\mathfrak{g}_{\bar{1}}\simeq V_2\otimes
V_2\otimes V_2$, where $V_2$ is the natural module of
$\mathfrak{sl}_2$.

Fix $\sigma\in \mathfrak{S}_3$ and $\lambda\in \mathbb{C}^{\times}$.
Define $\theta(\sigma,\lambda)\in GL(D(2,1,\alpha))$ by
\begin{equation}
  \theta(\sigma,\lambda)((x_1,x_2,x_3), (u_1\otimes u_2\otimes
  u_3))=((x_{\sigma(1)},x_{\sigma(2)},x_{\sigma(3)}), \lambda(u_{\sigma(1)}\otimes u_{\sigma(2)}\otimes
  u_{\sigma(3)})).
\end{equation}
 Thus, if $\alpha\notin \{1,-\frac{1}{2},-2\}$ and $\alpha^3\neq1$, $\textbf{Aut}
 D(2,1,\alpha)$ is generated by $\mbox{Ad}(SL_2\times SL_2\times SL_2)$; if
 $\alpha\in\{1,-\frac{1}{2},-2\}$, $\textbf{Aut}
 D(2,1,\alpha)$ is generated by $\mbox{Ad}(SL_2\times SL_2\times SL_2)$ and
 $\theta((1,2),1)$; if $\alpha^3=1$ and $\alpha\neq1$, $\textbf{Aut}
 D(2,1,\alpha)$ is generated by $\mbox{Ad}(SL_2\times SL_2\times SL_2)$ and
 $\theta((1,2,3)\lambda)$, where $\lambda^2=\frac{1}{\alpha}$. These show
 that the Hom-Lie superalgebra construct over $D(2,1,\alpha)$ is
 trivial.

\section{Proof of the Main Theorem for Cartan Type Lie Superlagebras}

\subsection{Hom-Lie superalgebra structure on $W(n)$}
\

Let $x=\frac{\partial}{\partial\xi_i}$,
$y=\frac{\partial}{\partial\xi_j}$ and
$z=\xi_j\frac{\partial}{\partial\xi_l}$. Then $[x,y]=[x,z]=0$ if
$i\neq j$, and $[y,z]=\frac{\partial}{\partial\xi_l}$. The
$\sigma$-twisted Jacobi identity implies that
\begin{equation}\label{4.5}
  [\sigma(\frac{\partial}{\partial\xi_i}),\frac{\partial}{\partial\xi_l}]=0\quad\mbox{for any $l$}.
\end{equation} Since $W(n)$ is transitive by proposition 2.1, we have
$\sigma(W(n)_{-1})=W(n)_{-1}$. Set
$x=\frac{\partial}{\partial\xi_i}$,
$y=\xi_s\frac{\partial}{\partial\xi_t}$ and
$z=\xi_p\frac{\partial}{\partial\xi_q}$, where $s\neq i$ and $p\neq
i$. Then $[x,y]=[x,z]=0$ and
$[y,z]=\delta_{t,p}\xi_s\frac{\partial}{\partial\xi_q}-\delta_{s,q}\xi_p\frac{\partial}{\partial\xi_t}$.
Hence $[\sigma(x),[y,z]]=0$. We write
$\sigma(\frac{\partial}{\partial\xi_i})=\sum_{k=1}^na_{ki}\frac{\partial}{\partial\xi_k}$.
It follows that $a_{ki}=0$ if $k\neq i$, i.e.
$\sigma(\frac{\partial}{\partial\xi_i})=a_i\frac{\partial}{\partial\xi_i}$
for $i=1,\ldots,n$.

Now let $x=\xi_i\frac{\partial}{\partial\xi_j}$,
$y=\frac{\partial}{\partial\xi_k}$ and
$z=\xi_k\frac{\partial}{\partial\xi_l}$, where $k\neq j$, $l\neq i$
and $k\neq i$. Then $[x,y]=[x,z]=0$. It implies that
\begin{equation}\label{4.6}
[\sigma(\xi_i\frac{\partial}{\partial\xi_j}),\frac{\partial}{\partial\xi_l}]=0\quad\mbox{for
any $l\neq i$}.
\end{equation}
 For $l=i$, we have
\begin{equation}\label{4.7}
  [\sigma(\xi_i\frac{\partial}{\partial\xi_j}),\frac{\partial}{\partial\xi_i}]=
  [\sigma(\frac{\partial}{\partial\xi_k}),-\xi_{k}\frac{\partial}{\partial\xi_j}]
  =-a_k\frac{\partial}{\partial\xi_j}
  =a_k[\xi_i\frac{\partial}{\partial\xi_j},\frac{\partial}{\partial\xi_i}].
\end{equation} Thus
\begin{equation}\label{4.8}
  \sigma(\xi_i\frac{\partial}{\partial\xi_j})=a_k\xi_i\frac{\partial}{\partial\xi_j},
\end{equation} which also tells us that $a_1=a_2=\cdots=a_n$.
It follows that $\sigma|_{W(n)_0}=\mbox{id}$. Then $a_k=1$ for
$k=1,2,\ldots,n$ in \eqref{4.8}. Hence
\begin{equation}\label{4.8.1}
  \sigma|_{W(n)_{-1}\bigoplus W(n)_0}=\mbox{id}.
\end{equation}

Set $y\in W(n)_{-1}$ and $z\in W(n)_0$. By the $\sigma$-twisted
Jacobi identity we have that
\begin{equation}\label{4.9}
  [\sigma(x)-x,[y,z]]=0\quad \mbox{for any $x\in W(n)_l$},
\end{equation} where $l=1,2,\ldots,n-1$. Thus $\sigma(x)-x\in
W(n)_{-1}$ by the transitivity of $W(n)$. Finally, we set $y\in
W(n)_{0}$ and $z\in W(n)_0$ then get that $\sigma(x)=x$, i.e.
$\sigma=\mbox{id}$.

\subsection{Hom-Lie superalgebra structure on $S(n)$}
\

Since $S(n)_{-1}=W(n)_{-1}$ and $S(n)_0\subset W(n)_0$, and
$S(n)_0\cong \mathfrak{sl}(n)$, we obtain that
\begin{equation}\label{4.8.2}
\sigma|_{S(n)_{-1}\bigoplus S(n)_0}=\mbox{id}.
\end{equation}  Note that $S(n)$
is also transitive. Now the same argumentation as in the last
paragraph of above subsection says that $\sigma=\mbox{id}$.

\subsection{Hom-Lie superalgebra structure on $\widetilde{S}(n)$}
\

In this subsection, we always set $n(\geq4)$ is an even integer.
Denote $A=1-\xi_1\cdots\xi_n$, and
$A_i=\xi_1\cdots\xi_{i-1}\xi_{i+1}\cdots\xi_n$. Note that
$\widetilde{S}(n)=\bigoplus_{l=-1}^{n-2}\widetilde{S}(n)_l$ where
$\widetilde{S}(n)_{-1}=\mbox{span}\{A\frac{\partial}{\partial\xi_i}|i=1,\ldots,n\}$
and $\widetilde{S}(n)_l=S(n)_l$ for $l\geq0$.

Let $x=A\frac{\partial}{\partial\xi_i}$,
$y=A\frac{\partial}{\partial\xi_j}$ and
$z=\xi_p\frac{\partial}{\partial\xi_q}$, where $i\neq j$ and $p\neq
q$. Then
$[x,y]=(-1)^iA_i\frac{\partial}{\partial\xi_j}+(-1)^jA_j\frac{\partial}{\partial\xi_i}$,
$[x,z]=\delta_{p,i}A\frac{\partial}{\partial\xi_q}$ and
$[y,z]=\delta_{p,j}A\frac{\partial}{\partial\xi_q}$. Thus
\begin{equation}\label{4.10}
  [\sigma(A\frac{\partial}{\partial\xi_i}),\delta_{p,j}A\frac{\partial}{\partial\xi_q}]
  =[(-1)^iA_i\frac{\partial}{\partial\xi_j}+(-1)^jA_j\frac{\partial}{\partial\xi_i},
  \sigma(\xi_p\frac{\partial}{\partial\xi_q})]-
  [\sigma(A\frac{\partial}{\partial\xi_j}),\delta_{p,i}A\frac{\partial}{\partial\xi_q}],
\end{equation} via the $\sigma$-twisted Jacobi identity. Now let $p=j$. We
write $\sigma(A\frac{\partial}{\partial\xi_i})=\alpha_i+\beta_i$
where $\alpha_i\in\widetilde{S}(n)_{-1}$ and
$\beta_i\in\bigoplus_{l\geq1}\widetilde{S}(n)_l$. Then the equation
\eqref{4.10} becomes to be
\begin{equation}\label{4.11}
[\alpha_i+\beta_i,A\frac{\partial}{\partial\xi_q}]=
[(-1)^iA_i\frac{\partial}{\partial\xi_j}+(-1)^jA_j\frac{\partial}{\partial\xi_i},
\sigma(\xi_j\frac{\partial}{\partial\xi_q})].
\end{equation} Observe that $[\alpha_i,A\frac{\partial}{\partial\xi_q}]\in
W(n)_{n-2}$ and
$[(-1)^iA_i\frac{\partial}{\partial\xi_j}+(-1)^jA_j\frac{\partial}{\partial\xi_i},
\sigma(\xi_j\frac{\partial}{\partial\xi_q})]\in W(n)_{n-2}$. Thus
$[\beta_i,A\frac{\partial}{\partial\xi_q}]$ lies also in
$W(n)_{n-2}$. But
$\beta_i\in\bigoplus_{l=1}^{n-2}\widetilde{S}(n)_l$. In these
subspaces, the action of operator $A\frac{\partial}{\partial\xi_q}$
is just the same as $\frac{\partial}{\partial\xi_q}$, which implies
that $[\beta_i,\frac{\partial}{\partial\xi_q}]=0$ for $q\neq j$.
These show that $\beta_i=0$, which follows that
\begin{equation} \label{4.11.1}
\sigma(\widetilde{S}(n)_{-1})=\widetilde{S}(n)_{-1}.
\end{equation}
Now let $x=A\frac{\partial}{\partial\xi_i}$,
$y=\xi_s\frac{\partial}{\partial\xi_t}$ and
$z=\xi_p\frac{\partial}{\partial\xi_q}$, where $s\neq i$ and $p\neq
i$. Then $[x,y]=[x,z]=0$ and
$[y,z]=\delta_{p,t}\xi_s\frac{\partial}{\partial\xi_q}-\delta_{q,s}\xi_p\frac{\partial}{\partial\xi_t}$.
It follows that
\begin{equation}\label{4.12}
 [\sigma(A\frac{\partial}{\partial\xi_i})-A\frac{\partial}{\partial\xi_i},
 \delta_{p,t}\xi_s\frac{\partial}{\partial\xi_q}-\delta_{q,s}\xi_p\frac{\partial}{\partial\xi_t}]=0.
\end{equation} It shows that
\begin{equation}\label{4.13}
\sigma(A\frac{\partial}{\partial\xi_i})=a_iA\frac{\partial}{\partial\xi_i}\quad\mbox{for}\quad
i=1,\ldots,n.
\end{equation}

Set $x=\xi_i\frac{\partial}{\partial\xi_j}$,
$y=A\frac{\partial}{\partial\xi_k}$ and
$z=\xi_k\frac{\partial}{\partial\xi_l}$, where $k\neq j$ and $k\neq
i$. If $l\neq i$, we have $[x,y]=[x,z]=0$ and
$[y,z]=A\frac{\partial}{\partial\xi_l}$. Hence
$[\sigma(\xi_i\frac{\partial}{\partial\xi_j}),A\frac{\partial}{\partial\xi_l}]=0$
via the $\sigma$-twisted Jacobi identity. If $l=i$, we have
$[x,y]=0$, $[y,z]=A\frac{\partial}{\partial\xi_i}$ and
$[x,z]=-\xi_k\frac{\partial}{\partial\xi_j}$. This implies that
\begin{equation}\label{4.14}
[\sigma(\xi_i\frac{\partial}{\partial\xi_j}),A\frac{\partial}{\partial\xi_i}]=
[\sigma(A\frac{\partial}{\partial\xi_k}),-\xi_k\frac{\partial}{\partial\xi_j}]
=-a_kA\frac{\partial}{\partial\xi_j}
=a_k[\xi_i\frac{\partial}{\partial\xi_j},A\frac{\partial}{\partial\xi_i}].
\end{equation}
Thus
\begin{equation}\label{4.15}
 [\sigma(\xi_i\frac{\partial}{\partial\xi_j})-a_k\xi_i\frac{\partial}{\partial\xi_j},
 A\frac{\partial}{\partial\xi_l}]=0\quad \mbox{for any $l=1,\ldots,n$}.
\end{equation} Hence
$\sigma(\xi_i\frac{\partial}{\partial\xi_j})=a_k\xi_i\frac{\partial}{\partial\xi_j}$.
Since $\widetilde{S}(n)_0\cong \mathfrak{sl}(n)$, we have that
\begin{equation}\label{4.15.1}
  \sigma|_{\widetilde{S}(n)_{-1}\bigoplus\widetilde{S}(n)_0}=\mbox{id}.
\end{equation}

Now let $x\in\widetilde{S}(n)_l$ for $l>0$. Then
\begin{equation}\label{4.16}
  [\sigma(x),[y,z]]=[x,[y,z]]\quad \mbox{for
  $y\in\widetilde{S}(n)_{-1}$ and
$z\in\widetilde{S}(n)_{0}$}.
\end{equation} Thus
$\sigma(x)-x\in\widetilde{S}(n)_{-1}$. Since $\widetilde{S}(n)_{-1}$
is an nontrivial irreducible module of $\widetilde{S}(n)_{0}\cong
\mathfrak{sl}(n)$, one shows that $\sigma(x)=x$ via setting
$y\in\widetilde{S}(n)_{0}$ and $z\in\widetilde{S}(n)_{0}$. In a
word, $\sigma=\mbox{id}$ on the superalgebras $\widetilde{S}(n)$.

\subsection{Hom-Lie superalgebra structure on $H(n)$}
\

First, one notes that $H(n)_{-1}=W(n)_{-1}$ and $H(n)_0\cong
\mathfrak{so}(n)$.

Let $x=\frac{\partial}{\partial\xi_i}$,
$y=\frac{\partial}{\partial\xi_j}$ and
$z=\xi_j\frac{\partial}{\partial\xi_l}-\xi_l\frac{\partial}{\partial\xi_j}$,
where $i\neq j$, $i\neq l$ and $j\neq l$. Then $[x,y]=[x,z]=0$ and
$[y,z]=\frac{\partial}{\partial\xi_l}$. We obtain that
\begin{equation}\label{4.17}
  [\sigma(\frac{\partial}{\partial\xi_i}),\frac{\partial}{\partial\xi_l}]
  =[\frac{\partial}{\partial\xi_i},\frac{\partial}{\partial\xi_l}]=0
\end{equation} for $l\neq i$.

Now set $l=i$, i.e. $x=\frac{\partial}{\partial\xi_i}$,
$y=\frac{\partial}{\partial\xi_j}$ and
$z=\xi_j\frac{\partial}{\partial\xi_i}-\xi_i\frac{\partial}{\partial\xi_j}$.
We have that
\begin{equation}\label{4.18}
  [\sigma(\frac{\partial}{\partial\xi_i}),
  [\frac{\partial}{\partial\xi_j},\xi_j\frac{\partial}{\partial\xi_i}-\xi_i\frac{\partial}{\partial\xi_j}]]
  =-[\sigma(\frac{\partial}{\partial\xi_j}),
  [\frac{\partial}{\partial\xi_i},\xi_j\frac{\partial}{\partial\xi_i}-\xi_i\frac{\partial}{\partial\xi_j}]],
  \end{equation}i.e.
\begin{equation}\label{4.19}
[\sigma(\frac{\partial}{\partial\xi_i}),\frac{\partial}{\partial\xi_i}]=
[\sigma(\frac{\partial}{\partial\xi_j}),\frac{\partial}{\partial\xi_j}].
  \end{equation}
We write $\sigma(\frac{\partial}{\partial\xi_i})=\alpha_i+\beta_i$
where $\alpha_i\in H(n)_{-1}$ and $\beta_i\in
\bigoplus\limits_{l\geq1}H(n)_l$. Moreover, $\beta_i$ can be written
as $\beta_i=\sum\limits_{k=1}^n\frac{\partial
g_i}{\partial\xi_k}\frac{\partial}{\partial\xi_k}$ for
$i=1,\ldots,n$. Thus
\begin{equation}\label{4.20}
 [\sum\limits_{k=1}^n\frac{\partial
g_i}{\partial\xi_k}\frac{\partial}{\partial\xi_k},\frac{\partial}{\partial\xi_i}]=
[\sum\limits_{k=1}^n\frac{\partial
g_j}{\partial\xi_k}\frac{\partial}{\partial\xi_k},\frac{\partial}{\partial\xi_j}].
\end{equation}This gives that
\begin{equation}\label{4.21}
  [\frac{\partial
g_i}{\partial\xi_j}\frac{\partial}{\partial\xi_j},\frac{\partial}{\partial\xi_i}]=
[\frac{\partial
g_j}{\partial\xi_j}\frac{\partial}{\partial\xi_j},\frac{\partial}{\partial\xi_j}]=0.
\end{equation} Thus
\begin{equation}\label{4.22}
  [\beta_i,\frac{\partial}{\partial\xi_i}]=0.
\end{equation}It implies that $\sigma(H(n)_{-1})=H(n)_{-1}$.
Moreover, let $x=\frac{\partial}{\partial\xi_i}$,
$y=\xi_s\frac{\partial}{\partial\xi_t}-\xi_t\frac{\partial}{\partial\xi_s}$
and
$z=\xi_p\frac{\partial}{\partial\xi_q}-\xi_q\frac{\partial}{\partial\xi_p}$
with $s,t,p,q\neq i$. Then $[\sigma(x),[y,z]]=0$. We write
$\sigma(\frac{\partial}{\partial\xi_i})=\sum\limits_{k=1}^na_{ik}\frac{\partial}{\partial\xi_k}$.Then
\begin{equation}\label{4.23}
[\sum\limits_{k=1}^na_{ik}\frac{\partial}{\partial\xi_k},
\delta_{t,p}(\xi_s\frac{\partial}{\partial\xi_q}-\xi_q\frac{\partial}{\partial\xi_s})
-\delta_{s,q}(\xi_p\frac{\partial}{\partial\xi_t}-\xi_t\frac{\partial}{\partial\xi_p})
-\delta_{t,q}(\xi_s\frac{\partial}{\partial\xi_p}-\xi_p\frac{\partial}{\partial\xi_s})
+\delta_{s,p}(\xi_q\frac{\partial}{\partial\xi_t}-\xi_t\frac{\partial}{\partial\xi_q})]=0
\end{equation}for $s,t,p,q\neq i$. It shows that
\begin{equation}\label{4.24}
\sigma({\frac{\partial}{\partial\xi_i}})=a_i\frac{\partial}{\partial\xi_i}\quad\mbox{for
$i=1,\ldots,n$}.
\end{equation}

Let
$x=\xi_i\frac{\partial}{\partial\xi_j}-\xi_j\frac{\partial}{\partial\xi_i}$,
$y=\frac{\partial}{\partial\xi_s}$ and
$z=\xi_s\frac{\partial}{\partial\xi_t}-\xi_t\frac{\partial}{\partial\xi_s}$.
Then
\begin{equation}\label{4.25}
 [\sigma(\xi_i\frac{\partial}{\partial\xi_j}-\xi_j\frac{\partial}{\partial\xi_i}),
 \frac{\partial}{\partial\xi_t}]=0\quad\mbox{for $t\neq i$ and $t\neq j$}.
\end{equation}

Let
$x=\xi_i\frac{\partial}{\partial\xi_j}-\xi_j\frac{\partial}{\partial\xi_i}$,
$y=\frac{\partial}{\partial\xi_s}$ and
$z=\xi_s\frac{\partial}{\partial\xi_i}-\xi_i\frac{\partial}{\partial\xi_s}$
with $s\neq i$. Then
\begin{eqnarray}\label{4.26}
  [\sigma(\xi_i\frac{\partial}{\partial\xi_j}-\xi_j\frac{\partial}{\partial\xi_i}),
  \frac{\partial}{\partial\xi_i}]&=&
  [\sigma(\frac{\partial}{\partial\xi_s}),
  -(\xi_s\frac{\partial}{\partial\xi_j}-\xi_j\frac{\partial}{\partial\xi_s})]\\\nonumber
  &=&a_s[\xi_s\frac{\partial}{\partial\xi_j}-\xi_j\frac{\partial}{\partial\xi_s},\frac{\partial}{\partial\xi_s}]
  \\\nonumber
  &=&-a_s\frac{\partial}{\partial\xi_j}\\\nonumber
  &=&[a_s(\xi_i\frac{\partial}{\partial\xi_j}-\xi_j\frac{\partial}{\partial\xi_i}),
  \frac{\partial}{\partial\xi_i}]
\end{eqnarray} for $s\neq i$. We also show that
\begin{equation}\label{4.27}
[\sigma(\xi_i\frac{\partial}{\partial\xi_j}-\xi_j\frac{\partial}{\partial\xi_i}),
  \frac{\partial}{\partial\xi_i}]=
  [a_s(\xi_i\frac{\partial}{\partial\xi_j}-\xi_j\frac{\partial}{\partial\xi_i}),
  \frac{\partial}{\partial\xi_j}]\quad\mbox{for $s\neq j$}.
\end{equation} Hence
\begin{equation}\label{4.28}
 [\sigma(\xi_i\frac{\partial}{\partial\xi_j}-\xi_j\frac{\partial}{\partial\xi_i})-
a_s(\xi_i\frac{\partial}{\partial\xi_j}-\xi_j\frac{\partial}{\partial\xi_i}),\frac{\partial}{\partial\xi_k}
]=0 \quad\mbox{for any $k=1,\ldots,n$}.
\end{equation}
These tell us that $\sigma(H(n)_{0})=H(n)_{0}$. Thus
$\sigma|_{H(n)_{0}}=\mbox{id}$. Since
$\sigma(\xi_i\frac{\partial}{\partial\xi_j}-\xi_j\frac{\partial}{\partial\xi_i})=
a_s(\xi_i\frac{\partial}{\partial\xi_j}-\xi_j\frac{\partial}{\partial\xi_i})$,
we have $a_s=1$. Then
\begin{equation} \label{4.29}
\sigma|_{H(n)_{-1}\bigoplus H(n)_{0}}=\mbox{id}.
\end{equation}

For $x\in H(n)_l$, where $l>0$, we have
\begin{equation}\label{4.30}
 [\sigma(x)-x,[y,z]]=0
\end{equation} whenever $y\in H(n)_{-1}$ and $z\in H(n)_0$. This
shows that $\sigma(x)-x\in H(n)_{-1}$, and hence $\sigma(x)-x=0$ by
setting $y,z \in H(n)_0$. That is, $\sigma=\mbox{id}$.

\end{document}